\documentclass[12pt,a4]{amsart}
\usepackage[a4paper, left=28mm, right=28mm, top=28mm, bottom=34mm]{geometry}
\usepackage[all]{xy}
\usepackage{amsmath}
\usepackage{amssymb}
\usepackage{amsthm}
\usepackage{mathrsfs}
\theoremstyle{definition}
\newtheorem{thm}{Theorem}[section]

\newtheorem{cor}[thm]{Corollary}
\newtheorem{prop}[thm]{Proposition}

\theoremstyle{definition}

\newtheorem{rem}[thm]{Remark}

\newtheorem{defn}[thm]{Definition}

\newtheorem{ex}[thm]{Example}

\def\F{{\mathbb F}}

\def\Q{{\mathbb Q}}

\def\Z{{\mathbb Z}}

\def\O{{\mathscr O}}
\def\P{{\mathbb P}}

\def\Br{\mathop{\mathrm{Br}}\nolimits}

\def\Frac{\mathop{\mathrm{Frac}}\nolimits}

\def\Gal{\mathop{\mathrm{Gal}}\nolimits}

\def\inv{\mathop{\mathrm{inv}}\nolimits}

\def\Jac{\mathop{\mathrm{Jac}}\nolimits}

\def\GL{\mathop{\mathrm{GL}}\nolimits}

\def\Pic{\mathop{\mathrm{Pic}}\nolimits}

\def\deg{\mathop{\text{\rm deg}}\nolimits}

\def\divi{\mathop{\mathrm{div}}}

\newcommand{\transp}[1]{{}^{t}\!{#1}}

\begin{document}
\title[Local-global principle for symmetric determinantal representations]
{The local-global principle for symmetric determinantal representations of smooth plane curves
in characteristic two}
\author{Yasuhiro Ishitsuka}
\address{Department of Mathematics, Faculty of Science, Kyoto University, Kyoto 606-8502, Japan}
\email{yasu-ishi@math.kyoto-u.ac.jp}
\author{Tetsushi Ito}
\address{Department of Mathematics, Faculty of Science, Kyoto University, Kyoto 606-8502, Japan}
\email{tetsushi@math.kyoto-u.ac.jp}

\date{\today}
\subjclass[2010]{Primary 14H50; Secondary 11D41, 14F22, 14G17, 14K15, 14K30}

\keywords{plane curve, determinantal representation, local-global principle, theta characteristic, characteristic two}

\maketitle

\begin{abstract}
We give an application of Mumford's theory of
canonical theta characteristics to a Diophantine problem in characteristic two.
We prove that a smooth plane curve over a global field of characteristic two
is defined by the determinant of a symmetric matrix with entries in linear forms
in three variables if and only if
such a symmetric determinantal representation exists everywhere locally.
It is a special feature in characteristic two because
analogous results are not true in other characteristics.
\end{abstract}


\section{Introduction}

Let $C \subset \P^2$ be a smooth plane curve of degree $d \geq 1$
over a field $K$.
The plane curve $C$ is said to admit a {\em symmetric determinantal representation} over $K$
if there is a symmetric matrix $M$ of size $d$
with entries in $K$-linear forms in three variables $X,Y,Z$
such that $C$ is defined by the equation $\det(M)=0$.
Two symmetric determinantal representations $M,M'$ of $C$ are said to be
{\em equivalent} if $M' = \lambda \, \transp{S} M S$ for some $S \in \GL_d(K)$
and $\lambda \in K^{\times}$, where $\transp{S}$ is the transpose of $S$.
Studying symmetric determinantal representations is a classical topic
in algebraic geometry and linear algebra,
which goes back to Hesse's work on plane cubics and quartics;
\cite{Dixon}, \cite[Ch 4]{Dolgachev}.
Recently, arithmetic properties of symmetric determinantal representations
and related linear orbits are studied by several people;
\cite{Ho}, \cite{BhargavaGrossWang}, \cite{IshitsukaIto:Fermat}, \cite{IshitsukaIto:LocalGlobal}.

In this paper, we prove the local-global principle
for the existence of symmetric determinantal representations in characteristic two.

\begin{thm}[see Theorem \ref{MainTheoremGeneral}]
\label{MainTheorem}
Let $K$ be a global field of {\em characteristic two},
and $C \subset \P^2$ a smooth plane curve of degree $d \geq 1$ over $K$.
Then the following are equivalent:
\begin{enumerate}
\item $C$ admits a symmetric determinantal representation over $K$.
\item $C$ admits a symmetric determinantal representation over $K_v$ for every place $v$ of $K$.
\end{enumerate}
\end{thm}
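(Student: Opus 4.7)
The direction (1)~$\Rightarrow$~(2) is immediate by base change, so I focus on the converse. My plan rests on three ingredients: the classical dictionary between symmetric determinantal representations and non-effective theta characteristics, Mumford's canonical theta characteristic in characteristic two, and a transfer of cohomological vanishing from local to global. First, I recall the Dixon-type correspondence over any field $F \supseteq K$: equivalence classes of symmetric determinantal representations of $C$ correspond essentially to line bundles $\L$ on $C_F$ with $\L^{\otimes 2} \cong \omega_{C_F} \cong \O_{C_F}(d-3)$ satisfying the non-effectivity condition $H^0(C_F, \L) = 0$. Passing from such an $F$-rational theta characteristic to a genuine $F$-rational symmetric determinantal representation requires trivializing a descent (Brauer) obstruction, which in characteristic $\neq 2$ is the root of the failure of the local-global principle treated in \cite{IshitsukaIto:LocalGlobal}.

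The decisive new ingredient in characteristic two is Mumford's canonical theta characteristic: every smooth projective curve $C$ over a field $k$ of characteristic two carries an intrinsically defined theta characteristic $\L_{\mathrm{can}}$, which is automatically $k$-rational and whose formation commutes with arbitrary field extensions. In particular, $\L_{\mathrm{can}} \otimes_K K_v$ is the canonical theta characteristic of $C_{K_v}$ for every place $v$. Using~(2), at each place the local symmetric determinantal representation yields a theta characteristic $\L_v$ on $C_{K_v}$ with $H^0(C_{K_v}, \L_v) = 0$, and the key step is to show $\L_v \cong \L_{\mathrm{can}} \otimes_K K_v$, so that the local vanishing is in fact vanishing of $H^0$ for $\L_{\mathrm{can}}$ locally. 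Granted this identification, $H^0(C, \L_{\mathrm{can}}) = 0$ follows by flat base change, and the canonicity of $\L_{\mathrm{can}}$ should also kill the descent obstruction from the first step, yielding a global symmetric determinantal representation.

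The principal obstacle is therefore the identification $\L_v \cong \L_{\mathrm{can}} \otimes_K K_v$: one must exploit the rigidity of theta characteristics peculiar to characteristic two, where $\Jac(C)[2]$ is a much more constrained group scheme than in other characteristics and where the canonical theta characteristic is singled out by an intrinsic property automatically satisfied by any symmetric determinantal realization in characteristic two. Handling this step, together with the concomitant vanishing of the descent obstruction, is where Mumford's theory of canonical theta characteristics enters essentially and, I expect, forms the technical heart of the proof; it is also precisely the reason the analogous statement fails outside characteristic two.
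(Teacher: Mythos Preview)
Your proposal contains a genuine and fatal gap: the assertion that Mumford's canonical theta characteristic $\mathscr{L}_{\mathrm{can}}$ is \emph{automatically $K$-rational} is false, and establishing its $K$-rationality is precisely the heart of the proof. Mumford defines $\mathscr{L}_{\mathrm{can}} = \O_{C \otimes_K \overline{K}}\big(\tfrac{1}{2}\divi(df)\big)$ as a line bundle on $C \otimes_K \overline{K}$, not on $C$; halving the (even) divisor $\divi(df)$ is problematic over $K$ in characteristic two. The paper stresses this in Remark~3.4, and Section~4 gives explicit counterexamples: for a conic $C$ without a $K$-rational point, $\mathscr{L}_{\mathrm{can}}$ is not defined over $K$ (Proposition~4.2), and for an ordinary cubic, $\mathscr{L}_{\mathrm{can}}$ may fail to descend even to $K^{\mathrm{sep}}$, descending only after a \emph{purely inseparable} quadratic extension (Remark~4.7). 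So neither ``canonical'' nor ``Galois-invariant over $K^{\mathrm{sep}}$'' gets you $K$-rationality for free.

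You have also inverted the difficulty. The identification $\mathscr{L}_v \cong (\mathscr{L}_{\mathrm{can}})_{K_v}$ that you call the ``principal obstacle'' is in fact immediate from St\"ohr--Voloch (Theorem~3.2(1)): over any algebraically closed field of characteristic two, every theta characteristic other than the canonical one is effective, so a non-effective $\mathscr{L}_v$ must be the canonical one. The genuine work lies elsewhere. The paper first uses the local hypothesis at a single place $v_0$ to get $[\mathscr{L}_{\mathrm{can}}] \in T(K_{v_0})$ for the theta characteristic torsor $T \subset \Pic_{C/K}$, then invokes Greenberg's approximation theorem on the finite $R$-scheme $\mathfrak{T}$ to produce a point in $T(\Frac R^{\mathrm{h}}) \subset T(K^{\mathrm{sep}})$; Galois invariance then gives $[\mathscr{L}_{\mathrm{can}}] \in \Pic_{C/K}(K)$. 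Finally, the Brauer obstruction in the exact sequence $0 \to \Pic(C) \to \Pic_{C/K}(K) \to \Br(K) \to \Br(C)$ is killed using the local hypothesis at all places but one together with the Albert--Brauer--Hasse--Noether sequence. None of this is visible in your outline, and your appeal to ``canonicity'' cannot substitute for it.
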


In our previous paper \cite{IshitsukaIto:LocalGlobal},
we studied the local-global principle for the existence of symmetric determinantal
representations over a global field of characteristic $\neq 2$.
We proved the local-global principle for conics and cubics;
see \cite[Theorem 1.1]{IshitsukaIto:LocalGlobal}.
For quartics, we constructed examples failing
the local-global principle under mild conditions on the characteristic of the global field;
see \cite[Corollary 1.3]{IshitsukaIto:LocalGlobal}.
According to some group theoretic results in \cite[Section 6]{IshitsukaIto:LocalGlobal},
we expect that similar examples failing the local-global principle exist
in any degree $\geq 4$ in any characteristic $\neq 2$;
see \cite[Problem 1.6]{IshitsukaIto:LocalGlobal}.

In this paper,
we study the remaining case of {\em characteristic two}.
Rather interestingly, the story is completely different.
The local-global principle for the existence of symmetric determinantal
representations holds in any degree in characteristic two.
The key is Mumford's theory of {\em canonical theta characteristics}
on smooth projective curves which exist only in characteristic two.

The outline of this paper is as follows.
After recalling a relation between symmetric determinantal representations
and non-effective theta characteristics in
Section \ref{Section:SymmetricDeterminantalRepresentations},
we recall Mumford's theory of canonical theta characteristics
in Section \ref{Section:ThetaCharacteristicChar2}.
Then we examine the case of conics and cubics in Section \ref{Section:Example}.
For conics and cubics,
the existence of symmetric determinantal representations
is related to other (more familiar) Diophantine problems.
We also give some examples of symmetric determinantal representations
for conics and cubics.
Then the main theorem is proved in Section \ref{Section:ProofMainTheorem}.
Finally, in Section \ref{Section:LinearDeterminantalRepresentation},
we consider an analogous problem for {\em linear determinantal representations}
(i.e.\ the matrix $M$ is not assumed to be symmetric).
It turns out that, concerning the existence of linear determinantal representations,
the local-global principle does not hold even for cubics.
Hence Theorem \ref{MainTheorem} cannot be generalized to the linear case.

\subsection*{Notation}

An algebraic closure of a field $K$ is denoted by $\overline{K}$,
and a separable closure of $K$ is denoted by $K^{\mathrm{sep}}$.
A global field of characteristic two is a finite extension of $\F_2(T)$,
where $\F_2$ is the finite field with two elements and $T$ is an indeterminate.
For a place $v$ of $K$, the completion of $K$ at $v$ is denoted by $K_v$.

\section{Symmetric determinantal representations and theta characteristics}
\label{Section:SymmetricDeterminantalRepresentations}

In this section, let $K$ be an arbitrary field.

\begin{defn}
Let $C$ be a projective smooth geometrically connected curve over $K$.
A {\em theta characteristic} on $C$ is a line bundle $\mathscr{L}$
satisfying $\mathscr{L} \otimes \mathscr{L} \cong \Omega^1_{C}$,
where $\Omega^1_{C}$ is the canonical sheaf on $C$.
A theta characteristic $\mathscr{L}$ on $C$ is {\em effective} (resp.\ {\em non-effective}) 
if $H^0(C,\mathscr{L}) \neq 0$ (resp.\ $H^0(C,\mathscr{L}) = 0$).
\end{defn}

The following result is classical and well-known at least when
the base field is algebraically closed of characteristic zero; see \cite{Dixon}, \cite[Ch 4]{Dolgachev}.
In fact, it is valid over arbitrary fields.

\begin{prop}[Beauville]
\label{Proposition:ExistenceCriterion}
Let $C \subset \P^2$ be a smooth plane curve over $K$.
There is a natural bijection between the following sets:
\begin{itemize}
\item the set of equivalence classes of symmetric determinantal representations of $C$ over $K$, and
\item the set of isomorphism classes of non-effective theta characteristics on $C$ defined over $K$.
\end{itemize}
\end{prop}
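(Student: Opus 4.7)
The plan is to construct the bijection explicitly in both directions, using the Hilbert--Burch style resolution attached to a symmetric determinantal representation, and conversely the minimal free resolution of the pushforward of a non-effective theta characteristic.

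First I would go from a symmetric determinantal representation to a theta characteristic. Given a symmetric matrix $M$ of linear forms with $\det(M)$ cutting out $C$, the map of $\O_{\P^2}$-modules
\begin{equation*}
M \colon \O_{\P^2}(-1)^d \longrightarrow \O_{\P^2}^d
\end{equation*}
is injective and its cokernel $\mathscr{F}$ is supported on $C$; because $M$ drops rank simply along $C$, the cokernel is a line bundle on $C$ (after suitable twist). Setting $\mathscr{L} := \mathscr{F}(-1)$, one computes $H^0(C,\mathscr{L}) = 0$ directly from the exact sequence on $\P^2$, so $\mathscr{L}$ is non-effective. To see $\mathscr{L}^{\otimes 2} \cong \Omega^1_C$, I would apply $\Homsheaf_{\O_{\P^2}}(-,\O_{\P^2}(-3))$ to the resolution, use the adjunction formula $\omega_C \cong \O_C(d-3)$, and use that the transpose of $M$ equals $M$ to identify the dualized resolution with a twist of the original one.

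In the opposite direction, starting from a non-effective theta characteristic $\mathscr{L}$ on $C$, I would push forward $\mathscr{L}$ (suitably normalized) to $\P^2$ and take its minimal graded free resolution. Cohomology vanishing ($H^0(C,\mathscr{L})=0$ together with the duality-equivalent vanishing of $H^1(C,\mathscr{L})=0$) forces the resolution to have exactly two terms of the same rank $d$, giving a square matrix of linear forms $M$ whose determinant vanishes on $C$; comparing degrees with the Hilbert polynomial of $\mathscr{L}$ shows $\det(M)$ is a constant multiple of the defining equation of $C$. The symmetry of $M$ is the heart of the proof: the theta characteristic condition $\mathscr{L}\cong \Homsheaf_{\O_C}(\mathscr{L},\omega_C)$ implies, via Grothendieck--Serre duality and the uniqueness of the minimal free resolution, that $M$ and its transpose $\transp{M}$ are equivalent via conjugation $\transp{M} = \transp{S}MS$ for some $S\in \GL_d(K)$; choosing the self-duality isomorphism of $\mathscr{L}$ appropriately forces $S$ to be symmetric, and then replacing $M$ by $\transp{T}MT$ for a square root $T$ of $S$ (which exists after verifying that $S$ lies in the same $\GL_d$-orbit as the identity under congruence --- this is where one uses that $\mathscr{L}$ is defined over $K$ and that the pairing $\mathscr{L}\otimes \mathscr{L}\to \omega_C$ is canonical) yields a symmetric representative.

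I would then verify that the two constructions are mutually inverse on isomorphism/equivalence classes: a base change $S\in\GL_d(K)$ on the free modules in the resolution corresponds exactly to the substitution $M \mapsto \transp{S}MS$, and the scalar $\lambda\in K^{\times}$ corresponds to rescaling the chosen isomorphism $\mathscr{L}^{\otimes 2}\cong \Omega^1_C$; both are absorbed in the passage to isomorphism classes of $\mathscr{L}$.

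The main obstacle is the symmetry statement in the second construction: producing an honest symmetric matrix over $K$ (not merely a matrix congruent to its transpose). For this I would argue as in Beauville's original treatment by packaging the self-duality into the functoriality of the minimal free resolution and then invoking that congruence of symmetric bilinear forms is Galois-stable to descend from $K^{\sep}$ to $K$; since all constructions are intrinsic (pushforward, $\RHom$, minimal resolution), descent is automatic once the claim is established geometrically.
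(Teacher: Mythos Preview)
The paper's own proof is merely a citation to \cite[Proposition 4.2]{BeauvilleDeterminantal} and the authors' earlier papers, so your sketch supplies far more detail than the paper does, and its overall architecture---cokernel of $M$ in one direction, minimal free resolution of the pushforward in the other---is exactly Beauville's.

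There is, however, a genuine gap in your symmetrization step. Having obtained $\transp{M} = \transp{S}MS$ with $S$ symmetric, you propose to replace $M$ by $\transp{T}MT$ where $T$ is a ``square root'' of $S$ in the congruence sense, i.e.\ $\transp{T}T = S$. Over an arbitrary field $K$ such a $T$ need not exist: you are asking that the symmetric bilinear form represented by $S$ be congruent to the identity over $K$, which already fails for anisotropic forms in characteristic $\neq 2$, and in characteristic two---precisely the case this paper is about---the situation is worse still, since symmetric and alternating forms are no longer disjoint and diagonalization is unavailable. Your Galois-descent fallback does not rescue this, because the obstruction is not a descent problem but the non-triviality of the Witt group (or its characteristic-two analogue). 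Beauville's actual argument, following Catanese, avoids the square-root trick entirely: one lifts the \emph{symmetric} isomorphism $\mathscr{L} \overset{\sim}{\to} \Homsheaf_{\O_C}(\mathscr{L},\omega_C)$ step by step through the minimal resolution, using that the lift at each stage can be chosen compatibly with the transposition involution; this produces a resolution that is literally self-dual, hence a symmetric matrix, over $K$ and in any characteristic. You allude to this (``packaging the self-duality into the functoriality of the minimal free resolution''), but that is the whole argument, not a reformulation of the square-root step.
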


\begin{proof}
See \cite[Proposition 4.2]{BeauvilleDeterminantal}.
See also
\cite[Proposition 2.2, Corollary 2.3]{IshitsukaIto:Fermat},
\cite[Theorem 2.2]{IshitsukaIto:LocalGlobal},
\end{proof}

\section{Canonical theta characteristics in characteristic two}
\label{Section:ThetaCharacteristicChar2}

In this section, let $K$ be a field of {\em characteristic two}.

In his foundational paper on theta characteristics,
Mumford observed the following ``strange'' fact; see \cite[p.191]{MumfordTheta}.
Let $C$ be a projective smooth geometrically connected curve over $K$,
and $K(C)$ the function field of $C$.
Take a rational function $f \in K(C)$ which is not a square in $\overline{K}(C)$.
An easy local calculation shows that the order of the divisor $\divi(df)$
at every geometric point $x \in C(\overline{K})$ is an even integer.
Hence $\divi(df)$ is always divisible by 2 as a divisor on $C \otimes_K \overline{K}$.
The associated line bundle
$\mathscr{L}_{\mathrm{can}} := \O_{C \otimes_K \overline{K}}\big( \frac{1}{2} \divi(df) \big)$
is called {\em the canonical theta characteristic} on $C \otimes_K \overline{K}$.
The isomorphism class of $\mathscr{L}_{\mathrm{can}}$ does not depend on the choice of $f$.

We are interested in non-effective theta characteristics
(cf.\ Proposition \ref{Proposition:ExistenceCriterion}).
Using the Cartier operator and logarithmic differential forms,
St\"ohr and Voloch proved the following beautiful results.

\begin{thm}[St\"ohr-Voloch]
\label{Theorem:ThetaCharacteristic}
\begin{enumerate}
\item Any theta characteristic on $C \otimes_K \overline{K}$ not isomorphic to the canonical theta characteristic is effective.
\item The canonical theta characteristic $\mathscr{L}_{\mathrm{can}}$
is non-effective if and only if the Jacobian variety $\Jac(C)$ is ordinary.
\end{enumerate}
\end{thm}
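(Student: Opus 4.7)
The plan is to encode effectivity of a theta characteristic on $C \otimes_K \overline{K}$ in terms of holomorphic $1$-forms with even divisor. A theta characteristic $\mathscr{L}$ is effective if and only if there exists a non-zero $\omega \in H^0(C, \Omega^1_C)$ with $\divi(\omega) = 2D$ for some effective divisor $D$ such that $\mathscr{O}_C(D) \cong \mathscr{L}$: for a non-zero section $s$ of $\mathscr{L}$, take $\omega = s^2$ under $\mathscr{L}^{\otimes 2} \cong \Omega^1_C$; conversely, the effective $D$ gives a non-zero section of $\mathscr{L}$. I work throughout over $\overline{K}$, where $[\overline{K}(C):\overline{K}(C)^2] = 2$.

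I would prove (2) first. By definition $\mathscr{L}_{\mathrm{can}} = \mathscr{O}_C\!\bigl(\tfrac{1}{2}\divi(df)\bigr)$, so $\mathscr{L}_{\mathrm{can}}$ is effective iff some $g \in \overline{K}(C)^{\times}$ makes $g^2\, df$ a non-zero element of $H^0(C, \Omega^1_C)$. The characteristic-two identity $d(g^2 f) = g^2\, df$ (since $d(g^2) = 2g\, dg = 0$) rewrites this as the existence of a non-zero exact holomorphic $1$-form. The Cartier operator $\mathcal{C}$ on rational $1$-forms has kernel $d\,\overline{K}(C)$, so the condition becomes $\ker\bigl(\mathcal{C}|_{H^0(C,\Omega^1_C)}\bigr) \neq 0$. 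By Serre duality, $\mathcal{C}$ is, up to Frobenius twist, the transpose of the absolute Frobenius on $H^1(C, \mathscr{O}_C)$, and $\Jac(C)$ is ordinary iff this Frobenius is bijective; this gives (2).

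For (1), given $\mathscr{L} \not\cong \mathscr{L}_{\mathrm{can}}$, write $\mathscr{L} = \mathscr{L}_{\mathrm{can}} \otimes \mathscr{N}$ with $\mathscr{N}$ a non-trivial $2$-torsion line bundle, and pick $h \in \overline{K}(C)^{\times}$ (necessarily a non-square) with $\divi(h) = 2E$ where $E$ represents $\mathscr{N}$. The key construction is the logarithmic differential $\omega := dh/h$. The crucial local calculation: near any point $x$, write $h = t^{2k} v$ with $v$ a local unit and $t$ a uniformizer; then $dh = 2k\, t^{2k-1} v\, dt + t^{2k}\, dv = t^{2k}\, dv$ in characteristic two, so $\omega = dv/v$ is regular at $x$. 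Expanding $v = \sum v_i t^i$ gives $dv = \sum v_{2j+1}\, t^{2j}\, dt$ (only even-exponent terms survive), so $v_x(\omega)$ is automatically even. Hence $\omega \in H^0(C,\Omega^1_C) \setminus \{0\}$ and $\divi(\omega) = 2D$ for an effective $D$. Computing divisor classes, $\divi(\omega) = \divi(dh) - \divi(h) = 2 D_{\mathrm{can}} - 2E$ (using that $\mathscr{L}_{\mathrm{can}}$ may be defined via any non-square, in particular $h$), so $\mathscr{O}_C(D) \cong \mathscr{L}_{\mathrm{can}} \otimes \mathscr{N}^{-1} \cong \mathscr{L}$, producing a non-zero section of $\mathscr{L}$.

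The main obstacle is the characteristic-two specific miracle in (1): once $\divi(h)$ is even, the logarithmic derivative $dh/h$ is automatically a holomorphic $1$-form with even valuations everywhere, because the term $2k\, t^{2k-1}$ in the naive derivative vanishes in the base field. This phenomenon has no analogue in odd characteristic, and it is precisely what furnishes a distinguished effective section of every non-canonical theta characteristic. Part (2), by contrast, rests on the well-known Cartier-Frobenius duality on cohomology, which I would quote from the literature.
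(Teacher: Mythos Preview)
Your argument is correct and is precisely the route the paper points to: the paper does not prove the theorem itself but cites \cite[\S 3]{StoehrVoloch}, noting explicitly that the proof uses ``the Cartier operator and logarithmic differential forms,'' which is exactly your two-step strategy (Cartier--Frobenius duality for (2), the logarithmic differential $dh/h$ for (1)). One small remark for completeness: in (1) you use $\mathscr{N}^{-1}\cong\mathscr{N}$ tacitly when identifying $\mathscr{L}_{\mathrm{can}}\otimes\mathscr{N}^{-1}$ with $\mathscr{L}$, which is of course immediate since $\mathscr{N}$ is $2$-torsion.
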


\begin{proof}
See \cite[\S 3]{StoehrVoloch}.
\end{proof}

Combining Proposition \ref{Proposition:ExistenceCriterion} and
Theorem \ref{Theorem:ThetaCharacteristic},
we have the following corollary.

\begin{cor}
\label{Corollary:SymmetricDeterminantalRepresentationExistence}
Let $K$ be a field of characteristic two,
and $C \subset \P^2$ a smooth plane curve of degree $d \geq 1$ over $K$.
\begin{enumerate}
\item There is at most one equivalence class of symmetric determinantal
representations of $C$ over $K$.
\item $C$ admits a symmetric determinantal representation over $\overline{K}$
if and only if $\Jac(C)$ is ordinary.
\item $C$ admits a symmetric determinantal representation over $K$
if and only if $\Jac(C)$ is ordinary and
the canonical theta characteristic $\mathscr{L}_{\mathrm{can}}$ is defined over $K$.
\end{enumerate}
\end{cor}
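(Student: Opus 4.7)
The plan is to derive all three parts as essentially formal consequences of the bijection in Proposition \ref{Proposition:ExistenceCriterion} combined with the two assertions of Theorem \ref{Theorem:ThetaCharacteristic}. The main point to internalize first is that, in characteristic two, the canonical theta characteristic $\mathscr{L}_{\mathrm{can}}$ occupies a distinguished position: by Theorem \ref{Theorem:ThetaCharacteristic}(1) it is the \emph{unique} candidate, up to isomorphism over $\overline{K}$, for a non-effective theta characteristic on $C \otimes_K \overline{K}$. Once this is in hand, each of the three statements reduces to tracking whether this unique candidate (a) is actually non-effective, and (b) descends to $K$.

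For (1), I would argue as follows. By Proposition \ref{Proposition:ExistenceCriterion}, equivalence classes of symmetric determinantal representations of $C$ over $K$ correspond bijectively to isomorphism classes of non-effective theta characteristics on $C$ defined over $K$. Any such theta characteristic base-changes to a non-effective theta characteristic on $C \otimes_K \overline{K}$, which by Theorem \ref{Theorem:ThetaCharacteristic}(1) must be isomorphic to $\mathscr{L}_{\mathrm{can}}$. Hence any two non-effective theta characteristics over $K$ are isomorphic after base change to $\overline{K}$, and therefore are isomorphic as line bundles over $K$ (since the isomorphism class over $K$ is determined by the isomorphism class over $\overline{K}$ together with a Galois descent datum, and here there is only one possibility). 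This forces at most one equivalence class over $K$.

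For (2), the curve $C$ admits a symmetric determinantal representation over $\overline{K}$ if and only if there exists a non-effective theta characteristic on $C \otimes_K \overline{K}$, again by Proposition \ref{Proposition:ExistenceCriterion}. By Theorem \ref{Theorem:ThetaCharacteristic}(1), the only theta characteristic that can possibly be non-effective is $\mathscr{L}_{\mathrm{can}}$, and by Theorem \ref{Theorem:ThetaCharacteristic}(2), this one is non-effective precisely when $\Jac(C)$ is ordinary, which gives the claimed equivalence. For (3), combining (1) and (2): if $C$ admits a symmetric determinantal representation over $K$, then by Proposition \ref{Proposition:ExistenceCriterion} there is a non-effective theta characteristic $\mathscr{L}$ over $K$, and its base change to $\overline{K}$ must equal $\mathscr{L}_{\mathrm{can}}$, so $\Jac(C)$ is ordinary and $\mathscr{L}_{\mathrm{can}}$ descends to $K$. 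Conversely, if both conditions hold, then $\mathscr{L}_{\mathrm{can}}$ is a theta characteristic defined over $K$ which, after base change, is non-effective by Theorem \ref{Theorem:ThetaCharacteristic}(2); non-effectivity is detected by the vanishing of $H^0$, which is stable under faithfully flat base change, so $\mathscr{L}_{\mathrm{can}}$ is already non-effective over $K$, and Proposition \ref{Proposition:ExistenceCriterion} produces the desired representation.

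There is essentially no hard step here; the work has been done in the cited results. The only mildly delicate point is the descent argument in (1) and (3): one must check that ``defined over $K$'' for the isomorphism class of a line bundle is unambiguous and that non-effectivity is preserved under base change in both directions. Both are standard, since $H^0(C \otimes_K \overline{K}, \mathscr{L}_{\mathrm{can}} \otimes_K \overline{K}) = H^0(C, \mathscr{L}_{\mathrm{can}}) \otimes_K \overline{K}$ for a coherent sheaf on a proper $K$-scheme.
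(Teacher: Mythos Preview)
Your proposal is correct and follows exactly the approach the paper intends: the paper itself offers no explicit proof, merely stating that the corollary follows by ``combining Proposition \ref{Proposition:ExistenceCriterion} and Theorem \ref{Theorem:ThetaCharacteristic}'', and you have spelled out precisely how that combination works.

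One small imprecision worth tightening: in part (1), the parenthetical about ``Galois descent datum, and here there is only one possibility'' is not quite the right justification, especially since in characteristic two one may need to pass to $\overline{K}$ rather than $K^{\mathrm{sep}}$. The clean reason that two line bundles on $C$ isomorphic over $\overline{K}$ are already isomorphic over $K$ is simply that $\Pic(C) \hookrightarrow \Pic(C \otimes_K \overline{K})$ is injective for any proper geometrically integral $K$-scheme (if $L_{\overline{K}}$ is trivial then $H^0(C,L) \otimes_K \overline{K} = H^0(C_{\overline{K}}, L_{\overline{K}}) \neq 0$, and a nonzero section of a degree-zero line bundle on an integral curve is nowhere vanishing). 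You already invoke this flat-base-change fact at the end, so the argument is internally consistent; just replace the descent remark with this.
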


\begin{rem}
The canonical theta characteristic
$\mathscr{L}_{\mathrm{can}}$ is defined as a line bundle on
$C \otimes_K \overline{K}$, not on $C$.
Since ``dividing the divisor $\divi(df)$ by 2'' is problematic in characteristic two,
$\mathscr{L}_{\mathrm{can}}$ is not defined over $K$ in general.
For conics and cubics,
we can give necessary and sufficient conditions for
$\mathscr{L}_{\mathrm{can}}$ to be defined over $K$;
see Proposition \ref{Proposition:Conics} and Proposition \ref{Proposition:Cubics}.
\end{rem}

\begin{rem}
By Corollary \ref{Corollary:SymmetricDeterminantalRepresentationExistence},
in characteristic two,
smooth plane curves with non-ordinary Jacobian varieties
do not admit symmetric determinantal representations
even over an algebraic closure of the base field.
In contrast, in characteristic zero,
all plane curves (including singular ones)
admit symmetric determinantal representations over
an algebraic closure of the base field;
see \cite{Dixon}, \cite[Remark 4.4]{BeauvilleDeterminantal}.
\end{rem}

\section{Examples: conics and cubics}
\label{Section:Example}

In order to illustrate the problem of finding symmetric determinantal representations
in characteristic two, we examine the case of smooth plane conics and cubics in some detail.
It turns out that the existence of the canonical theta characteristic
is related to other Diophantine problems.
For analogous results in characteristic $\neq 2$,
see \cite[Section 4]{IshitsukaIto:LocalGlobal}.

\subsection{Conics}
\label{Subsection:SmoothConics}

\begin{prop}
\label{Proposition:Conics}
Let $K$ be a field of characteristic two,
and $C \subset \P^2$ a smooth plane conic over $K$.
The following are equivalent:
\begin{enumerate}
\item The canonical theta characteristic $\mathscr{L}_{\mathrm{can}}$ is defined over $K$.
\item $C$ admits a symmetric determinantal representation over $K$.
\item $C$ is isomorphic to $\P^1$ over $K$.
\item $C$ has a $K$-rational point.
\item $C$ has a $K$-rational divisor of odd degree.
\end{enumerate}
\end{prop}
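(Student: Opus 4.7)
The plan is to exploit that $C$ has genus zero. This means $\Omega^1_C$ has degree $-2$, the Jacobian $\Jac(C)$ is trivial (hence automatically ordinary), and geometrically $C_{\overline{K}} \cong \P^1_{\overline{K}}$ has a unique line bundle of each integer degree. In this setting, (1) $\Leftrightarrow$ (2) is immediate from Corollary \ref{Corollary:SymmetricDeterminantalRepresentationExistence}(3), since the ordinariness hypothesis is vacuous.

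The equivalences (3) $\Leftrightarrow$ (4) $\Leftrightarrow$ (5) are essentially classical. The implications (3) $\Rightarrow$ (4) and (4) $\Rightarrow$ (5) are trivial. For (5) $\Rightarrow$ (4) I would combine an odd-degree $K$-rational divisor $D$ with a suitable integer multiple of the $K$-rational canonical divisor $K_C$ (of degree $-2$) to produce a $K$-rational divisor $D'$ of degree one; Riemann--Roch gives $h^0(D') = 2$, so a nonzero $K$-rational section of $\O_C(D')$ exhibits an effective $K$-rational degree one divisor, that is, a $K$-rational point. For (4) $\Rightarrow$ (3), the complete linear system of a $K$-rational point gives a $K$-isomorphism $C \cong \P^1$.

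It remains to prove (1) $\Leftrightarrow$ (4). Since $\mathscr{L}_{\mathrm{can}}^{\otimes 2} \cong \Omega^1_C$, its degree is $-1$, and over $\overline{K}$ it is the unique line bundle of that degree on $C_{\overline{K}}$. Thus condition (1) is equivalent to the existence of a $K$-rational line bundle of degree $-1$ on $C$. Given such an $\mathscr{L}$, the dual $\mathscr{L}^{-1}$ has degree one and satisfies $h^0(\mathscr{L}^{-1}) = 2$ by Riemann--Roch, so a nonzero $K$-rational global section produces a $K$-rational point, yielding (1) $\Rightarrow$ (4). Conversely, a $K$-rational point $P$ gives the $K$-rational line bundle $\O_C(-P)$ of degree $-1$, which geometrically is forced to agree with $\mathscr{L}_{\mathrm{can}}$ by the uniqueness above, proving (4) $\Rightarrow$ (1). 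I do not anticipate a serious obstacle: the entire argument rests on Riemann--Roch in genus zero together with the rigidity of line bundles on $\P^1_{\overline{K}}$, with the only point warranting care being the identification of $\mathscr{L}_{\mathrm{can}}$ over $\overline{K}$ as the unique degree $-1$ line bundle, which is automatic here.
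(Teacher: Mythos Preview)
Your proposal is correct and follows essentially the same route as the paper. The paper cites the equivalences (2) $\Leftrightarrow$ (3) $\Leftrightarrow$ (4) $\Leftrightarrow$ (5) to \cite[Proposition 4.1]{IshitsukaIto:LocalGlobal} and then argues (1) $\Rightarrow$ (5) (via $\deg \mathscr{L}_{\mathrm{can}} = -1$) and (4) $\Rightarrow$ (1) (via $\O_C(-P)$) directly; your write-up supplies the standard Riemann--Roch details for the cited equivalences and closes the loop through (1) $\Rightarrow$ (4) instead of (1) $\Rightarrow$ (5), which is a cosmetic difference.
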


\begin{proof}
The equivalences
(2) $\Leftrightarrow$ (3) $\Leftrightarrow$ (4) $\Leftrightarrow$ (5)
are proved in \cite[Proposition 4.1]{IshitsukaIto:LocalGlobal}.
Since the genus of $C$ is zero, $\Jac(C)$ is ordinary.
Hence the equivalence
(1) $\Leftrightarrow$ (2) follows from Corollary
\ref{Corollary:SymmetricDeterminantalRepresentationExistence} (3).
We can prove it directly without using the results of St\"ohr-Voloch
as follows.
(1) $\Rightarrow$ (5): If $\mathscr{L}_{\mathrm{can}}$ is defined over $K$,
we have $\deg \mathscr{L}_{\mathrm{can}} = \frac{1}{2} \deg \Omega^1_{C} = -1$.
(4) $\Rightarrow$ (1): For a $K$-rational point $P \in C(K)$,
the line bundle $\mathcal{L} = \O_C(-P)$
is the canonical theta characteristic defined over $K$
because it is a line bundle of degree $-1$;
see \cite[Proposition 3.3]{IshitsukaIto:LocalGlobal}.
\end{proof}

\begin{ex}
\label{Example:Conic}
Let $B$ be a quaternion division algebra over a field $K$ of characteristic two.
Such a $B$ exists when $K$ is a global or local field.
Let $C_B$ be the Severi-Brauer variety associated with $B$; see \cite[Ch X, \S 6]{SerreLocalFields}.
Since $C_B$ has no $K$-rational point,
$C_B$ does not admit a symmetric determinantal representation over $K$ by Proposition \ref{Proposition:Conics}.
The conic $C_B$ admits a symmetric determinantal representation over
a {\em separable} quadratic extension of $K$
because $C_B$ has a rational point over a separable quadratic extension by Bertini's theorem;
see \cite[Corollaire 6.11 (2)]{Jouanolou}.
(We can use Bertini's theorem because $K$ is infinite.
The Brauer group of a finite field is trivial; see \cite[Ch X, \S 7]{SerreLocalFields}.)
Note that $C_B$ also admits a symmetric determinantal representation over
a {\em purely inseparable} quadratic extension of $K$.
To see this, write the defining equation of $C_B \subset \P^2$ as
\[ a X^2 + b Y^2 + c Z^2 + d X Y + e Y Z + f X Z = 0 \quad (a,b,c,d,e,f \in K). \]
Since $C_B$ is smooth, at least one of $d,e,f$ is not zero.
We may assume $d \neq 0$ by changing coordinates.
Since $C_B$ has no $K$-rational point, $[1 : 0 : 0 ]$ does not lie on $C_B$.
Hence $a \neq 0$.
We put $t := a^{-1} (b f^2 + c d^2 + e f d)$.
By direct calculation, the point $[\,\sqrt{t} : f : d\,]$ lies on $C_B$.
By Proposition \ref{Proposition:Conics},
$C_B$ admits a symmetric determinantal representation over $K(\sqrt{t})$,
which is a purely inseparable quadratic extension of $K$.
\end{ex}

\begin{ex}
In this example, let $K$ be a field of {\em arbitrary characteristic}.
Any smooth plane conic $C \subset \P^2$ over $K$
admits at most one equivalence class of symmetric determinantal
representations over $K$
because $\Pic(C)$ has no non-trivial $2$-torsion element.
(In fact, we have $\Pic(C) \subset \Pic(C \otimes_K \overline{K}) \cong \Z$;
see \cite[Section 4.2]{IshitsukaIto:LocalGlobal}.)
When $C$ admits one, it is possible to calculate
a symmetric determinantal representation explicitly as follows.
Let
\[ a X^2 + b Y^2 + c Z^2 + d X Y + e Y Z + f X Z = 0 \quad (a,b,c,d,e,f \in K) \]
be the defining equation of $C$.
By Proposition \ref{Proposition:Conics},
$C$ has a $K$-rational point.
By a linear change of variables,
we may assume that $[1 : 0 : 0 ]$ is a $K$-rational point on $C$,
and the tangent line to $C$ at $P$ is $(Z = 0)$.
Then we have $a = d = 0$ and $f \neq 0$.
Since the conic $C$ intersects with the line $(Z = 0)$ at $[1 : 0 : 0]$
with multiplicity two,
$[0 : 1 : 0 ]$ does {\em not} lie on $C$. Hence we have $b \neq 0$.
A symmetric determinantal representation of $C$ over $K$
is given by the symmetric matrix
\[ M = \begin{pmatrix}
Z  & bY \\
bY & - bfX - beY - bcZ
\end{pmatrix} \]
because $\det(M) = -b \cdot ( b Y^2 + c Z^2 + e Y Z + f X Z )$.
\end{ex}

\subsection{Cubics}

By Corollary \ref{Corollary:SymmetricDeterminantalRepresentationExistence} (3),
a smooth plane cubic in characteristic two
does not admit a symmetric determinantal representation
when its Jacobian variety is supersingular.
We shall only consider the ordinary case.

\begin{prop}
\label{Proposition:Cubics}
Let $K$ be a field of characteristic two,
and $C \subset \P^2$ a smooth plane cubic over $K$.
Assume that the Jacobian variety $\Jac(C)$ is ordinary.
The following are equivalent:
\begin{enumerate}
\item The canonical theta characteristic $\mathscr{L}_{\mathrm{can}}$ is defined over $K$.
\item $C$ admits a symmetric determinantal representation over $K$.
\item $\Jac(C)$ has a non-trivial $K$-rational 2-torsion point.
\end{enumerate}
\end{prop}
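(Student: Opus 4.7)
The plan is to prove the equivalence $(1) \Leftrightarrow (3)$, which combined with Corollary~\ref{Corollary:SymmetricDeterminantalRepresentationExistence}~(3) and the ordinariness assumption yields the full statement. Since $C$ has genus one, the canonical sheaf $\Omega^1_C$ has degree zero and admits a non-zero global section, so $\Omega^1_C \cong \mathscr{O}_C$; theta characteristics on $C_L$ for any extension $L/K$ are therefore exactly the 2-torsion elements of $\Pic^0(C_L)$. The implication $(1) \Rightarrow (3)$ then follows immediately: if $\mathscr{L}_{\mathrm{can}}$ descends to $C$, it represents a non-trivial class in $\Pic^0(C)[2]$ (non-trivial because $\mathscr{L}_{\mathrm{can}}$ is non-effective by Theorem~\ref{Theorem:ThetaCharacteristic}~(2) whereas $\mathscr{O}_C$ is effective), and the Hilbert-90 injection $\Pic^0(C) \hookrightarrow \Jac(C)(K)$ sends it to a non-trivial element of $\Jac(C)[2](K)$.

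For $(3) \Rightarrow (1)$, I will invoke the Leray spectral sequence for $\pi \colon C \to \Spec K$ with coefficients in $\mathbb{G}_m$, which furnishes the exact sequence
\[
0 \to \Pic(C) \to \underline{\Pic}_{C/K}(K) \xrightarrow{\delta} \Br(K) \to \Br(C).
\]
Given a non-trivial $T \in \Jac(C)[2](K) \subset \underline{\Pic}^0_{C/K}(K)$, the goal is to show that the obstruction $\delta(T) \in \Br(K)$ vanishes. On one hand $\delta(T)$ is 2-torsion because $T$ is; on the other hand the image of $\delta$ equals the relative Brauer group $\ker(\Br(K) \to \Br(C))$, which by the classical Roquette--Lichtenbaum theorem is annihilated by the index of $C$, and since $\mathscr{O}_C(1)$ is a $K$-rational line bundle of degree $3$ this index divides $3$. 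Hence $\delta(T)$ is killed by $\gcd(2,3) = 1$, so $\delta(T) = 0$, and $T$ lifts to some $\mathscr{L} \in \Pic^0(C)[2]$.

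It remains to identify this lift with $\mathscr{L}_{\mathrm{can}}$. Under ordinariness, $\Pic^0(C_{\overline{K}})[2] \cong \Z/2\Z$ contains a unique non-trivial element, which is $\mathscr{L}_{\mathrm{can}}$ by the uniqueness in Theorem~\ref{Theorem:ThetaCharacteristic}~(1); since $\mathscr{L}$ maps there to the non-trivial element (the image of $T$ under $\Jac(C)[2](K) \hookrightarrow \Jac(C)[2](\overline{K})$), we conclude $\mathscr{L} \otimes_K \overline{K} \cong \mathscr{L}_{\mathrm{can}}$, which is the content of (1). The principal obstacle is the clean invocation of the Roquette--Lichtenbaum annihilation theorem and the setup of the Leray obstruction map, both of which are standard over arbitrary (possibly imperfect) fields; once those are in place the argument is purely formal and uses nothing beyond the $\Z/2\Z$-torsor structure on theta characteristics together with the elementary fact that $\gcd(2,3)=1$.
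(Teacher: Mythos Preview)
Your proof is correct. The paper's own proof simply invokes Corollary~\ref{Corollary:SymmetricDeterminantalRepresentationExistence}~(3) for $(1)\Leftrightarrow(2)$ and defers to \cite[Proposition~4.2]{IshitsukaIto:LocalGlobal} for $(2)\Leftrightarrow(3)$, without spelling out any argument. You instead prove $(1)\Leftrightarrow(3)$ directly, and your key step---killing the Brauer obstruction $\delta(T)$ by observing that it is simultaneously $2$-torsion (since $2T=0$) and annihilated by the index of $C$, which divides $\deg\mathscr{O}_C(1)=3$---is precisely the mechanism that reappears later in the paper's proof of Theorem~\ref{MainTheoremGeneral} under the hypothesis ``$d$ is odd.'' So your route is self-contained and in effect anticipates that later argument specialized to $d=3$. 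One cosmetic remark: the fact that $\ker(\Br(K)\to\Br(C))$ is annihilated by the index is just the $\mathrm{Res}/\mathrm{Cor}$ trick (pull back along a closed point, use that $\Br(L)\to\Br(C_L)$ splits when $C(L)\neq\varnothing$, then corestrict), exactly as the paper uses it in Section~\ref{Section:ProofMainTheorem}; calling it Roquette--Lichtenbaum is a slight misattribution, since that name is usually reserved for the period--index equality or the vanishing of $\Br(C)$ over local fields.
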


\begin{proof}
(1) $\Leftrightarrow$ (2) follows from Corollary \ref{Corollary:SymmetricDeterminantalRepresentationExistence} (3).
(2) $\Leftrightarrow$ (3) is proved in \cite[Proposition 4.2]{IshitsukaIto:LocalGlobal}.
\end{proof}

\begin{rem}
Since $\Jac(C)$ is an ordinary elliptic curve in characteristic two,
its $j$-invariant is not zero.
We may assume that the Weierstrass equation of $\Jac(C)$ is given by the following form:
\[
Y^2 Z + XYZ = X^3 + a_2 X^2 Z + a_6 Z^3 \quad (a_2,a_6 \in K,\ a_6 \neq 0).
\]
See \cite[Appendix A]{Silverman}.
A direct calculation using \cite[Ch III, Group Law Algorithm 2.3]{Silverman}
shows that the non-trivial $\overline{K}$-rational 2-torsion point on $\Jac(C)$ is
$[0 : \sqrt{a_6} : 1]$.
Hence the conditions in Proposition \ref{Proposition:Cubics}
are satisfied if and only if $\sqrt{a_6} \in K$.
Therefore,
if $C$ does not admit a symmetric determinantal representation over $K$,
it does not admit one over $K^{\mathrm{sep}}$.
But, it admits one over $K(\sqrt{a_6})$, which is {\em purely inseparable} over $K$.
Compare these results with the results in Example \ref{Example:Conic}.
\end{rem}

\begin{rem}
In principle, it should be possible
to calculate symmetric determinantal representations of smooth plane cubics
corresponding to non-trivial $K$-rational $2$-torsion points.
An explicit calculation for general cubics
seems a computationally hard problem.
The situation becomes much simpler when $C$ has a $K$-rational point.
In that case, there is no obstruction accounted by the Brauer group $\Br(K)$
(or the relative Brauer group $\Br(C/K)$).
In \cite{Ishitsuka:FiniteField}, \cite{Ishitsuka:JSIAMLetters},
the first author gave an algorithm
to calculate linear (i.e.\ not necessarily symmetric) determinantal representations of
smooth plane cubics with rational points.
\end{rem}

\begin{ex}
The following example over $\F_2$ is calculated in \cite[Table 6]{Ishitsuka:FiniteField}:
\[
M =
\begin{pmatrix}
Y & 0 & X \\
0 & Z & Y \\
X & Y & X+Y+Z
\end{pmatrix}
\]
The cubic
\[ C \colon \det(M) = X^2 Z + XYZ + Y^3 + Y^2 Z + Y Z^2 = 0 \]
is a unique smooth plane cubic over $\F_2$, up to projective equivalence,
admitting only one equivalence class of linear (i.e.\ not necessarily symmetric)
determinantal representations over $\F_2$.
Similar cubics exist over $\F_3, \F_4, \F_5$.
But there are no such cubics over $\F_q \ (q \geq 7)$.
See \cite{Ishitsuka:FiniteField}, \cite{Ishitsuka:JSIAMLetters} for details.
\end{ex}

\begin{ex}
Here we give examples of symmetric determinantal representations
of twisted (or generalized) Hesse cubics
\[ C_{a,b,c,m} : a X^3 + b Y^3 + c Z^3 + m XYZ = 0 \quad
   (a,b,c,m \in K,\ abc(m^3 + 27 abc) \neq 0) \]
over a field $K$ of {\em characteristic two}.
These curves were studied by Desboves in 19th century;
\cite{Desboves}, \cite[p.130]{CasselsEllipticCurve}.
A formula given by Artin, Rodriguez-Villegas and Tate
(\cite[(1.5), (1.6)]{ArtinRodriguezVillegasTate})
shows that the Jacobian variety $\Jac(C_{a,b,c,m})$
is defined by the following Weierstrass equation:
\[ \Jac(C_{a,b,c,m}) : Y^2 Z + m XYZ + 9abc YZ^2 = X^3 + (-27 a^2 b^2 c^2 + m^3 abc)Z^3. \]
Then $\Jac(C_{a,b,c,m})$ is ordinary if and only if $m \neq 0$.
Using \cite[Ch III, Group Law Algorithm 2.3]{Silverman},
it is not difficult to show that
$\Jac(C_{a,b,c,m})$ has a non-trivial $K$-rational $2$-torsion point if and only if
$\sqrt{m^{-1} abc} \in K$.
When $m^{-1} abc$ is a square in $K$, the unique equivalence class of
symmetric determinantal representations of $C_{a,b,c,m}$ over $K$
is represented by
\[
M_{a,b,c,m} =
\begin{pmatrix}
aX & \sqrt{m^{-1} abc} \, Z & \sqrt{m^{-1} abc} \, Y \\
\sqrt{m^{-1} abc} \, Z & bY & \sqrt{m^{-1} abc} \, X \\
\sqrt{m^{-1} abc} \, Y & \sqrt{m^{-1} abc} \, X & cZ
\end{pmatrix}.
\]
In fact, we can directly check
\[ \det(M_{a,b,c,m}) = m^{-1} abc \, ( a X^3 + b Y^3 + c Z^3 + m XYZ). \]
Details on the calculation of
symmetric and linear determinantal representations of twisted Hesse cubics
will appear elsewhere.
Some examples over $\Q$ are given in \cite[Section 6]{Ishitsuka:JSIAMLetters}
for $m = 0$ (i.e.\ the case of ``twisted Fermat cubics'').
\end{ex}

\section{Proof of the main theorem}
\label{Section:ProofMainTheorem}

The following Theorem \ref{MainTheoremGeneral} is stronger than Theorem \ref{MainTheorem}.

For conics and cubics, Theorem \ref{MainTheoremGeneral} can be proved
directly using Proposition \ref{Proposition:Conics} and Proposition \ref{Proposition:Cubics}.
However, it seems difficult to generalize these propositions
because, as you see from the proof of Theorem \ref{MainTheoremGeneral} below,
giving such results amount to giving a defining equation of
the ``locus of canonical theta characteristics'' in the Picard scheme
and a splitting of a 2-torsion element in the Brauer group.
It seems an infeasible task in higher degree.
Instead, our approach to prove Theorem \ref{MainTheoremGeneral}
is to use Greenberg's approximation theorem
(\cite[Theorem 1]{GreenbergRationalPoints})
and some cohomological results on Brauer groups.

\begin{thm}
\label{MainTheoremGeneral}
Let $K$ be a global field of characteristic two,
and $C \subset \P^2$ a smooth plane curve of degree $d \geq 1$.
Assume that
$C$ admits a symmetric determinantal representation over $K_{v_0}$ for a place $v_0$ of $K$.
Moreover, assume that {\em at least one} of the following conditions is satisfied:
\begin{itemize}
\item $C$ has a $K$-rational point,
\item $d$ is odd, or
\item there is a place $w$ of $K$ such that
$C$ admits a symmetric determinantal representation over $K_{v}$ for every place $v \neq w$ of $K$.
\end{itemize}
Then $C$ admits a symmetric determinantal representation over $K$.
\end{thm}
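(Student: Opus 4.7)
The plan is to rephrase Theorem \ref{MainTheoremGeneral} as a descent problem for the canonical theta characteristic $\mathscr{L}_{\mathrm{can}}$ and then to kill the resulting class in $\Br(K)$. Write $G_K := \Gal(K^{\mathrm{sep}}/K)$. The hypothesis that $C$ admits a symmetric determinantal representation over $K_{v_0}$ forces, via Corollary \ref{Corollary:SymmetricDeterminantalRepresentationExistence} (2) applied over $K_{v_0}$, that $\Jac(C)$ is ordinary, so $\mathscr{L}_{\mathrm{can}}$ exists as a non-effective theta characteristic on $C \otimes_K \overline{K}$; because its construction is canonical, its isomorphism class is automatically $G_K$-invariant. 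By Corollary \ref{Corollary:SymmetricDeterminantalRepresentationExistence} (3), it then suffices to show that this class lies in the image of the injection $\Pic(C) \hookrightarrow \Pic(C \otimes_K \overline{K})^{G_K}$.

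For this I would invoke the Hochschild--Serre spectral sequence for $\mathbb{G}_m$ on $C \to \Spec K$, which, using Hilbert 90 and $H^0(C \otimes_K \overline{K}, \mathbb{G}_m) = \overline{K}^\times$, yields the standard five-term exact sequence
\[ 0 \to \Pic(C) \to \Pic(C \otimes_K \overline{K})^{G_K} \xrightarrow{\delta} \Br(K) \xrightarrow{\pi^*} \Br(C). \]
Set $\alpha := \delta([\mathscr{L}_{\mathrm{can}}])$; we want $\alpha = 0$. Two general observations are immediate: since $\mathscr{L}_{\mathrm{can}}^{\otimes 2} \cong \Omega^1_C$ is defined over $K$, we have $2\alpha = 0$ in $\Br(K)$; and since $\mathscr{L}_{\mathrm{can}}$ descends to $K_{v_0}$ by hypothesis (again by Corollary \ref{Corollary:SymmetricDeterminantalRepresentationExistence} (3), this time over $K_{v_0}$), the restriction $\alpha_{v_0} \in \Br(K_{v_0})$ vanishes.

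I would then dispatch the three cases. If $C(K) \neq \emptyset$, evaluation at a $K$-rational point retracts $\pi^*$, so $\pi^*$ is injective and $\alpha = 0$ at once. If $d$ is odd, the hyperplane section is a $K$-rational divisor of odd degree $d$ on $C$, and the standard push--pull argument through $\pi$ shows that $\ker \pi^*$ is killed by $d$; combined with $2\alpha = 0$ this forces $\alpha = 0$. In the third case, the local hypothesis yields $\alpha_v = 0$ for every $v \neq w$, and the Albert--Brauer--Hasse--Noether exact sequence for the global function field $K$,
\[ 0 \to \Br(K) \to \bigoplus_v \Br(K_v) \xrightarrow{\sum \inv_v} \Q/\Z \to 0, \]
forces $\inv_w(\alpha) = 0$ and hence $\alpha = 0$. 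The conceptual heart of the argument is the identification of the descent obstruction for $\mathscr{L}_{\mathrm{can}}$ as a 2-torsion element of $\Br(K)$, after which each of the three supplementary conditions supplies one of the classical mechanisms for killing Brauer classes, namely injectivity via a rational point, annihilation by an odd index, and global reciprocity. I expect the main technical subtlety to be verifying that $\alpha$ genuinely captures the descent obstruction in spite of the imperfectness of $K$ at characteristic two; this is presumably where Greenberg's approximation theorem enters, to reconcile the Picard scheme with honest line bundles on $C_{K_v}$ over the non-perfect local completions.
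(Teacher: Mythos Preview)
Your overall strategy is correct and matches the paper's: identify the obstruction to descending $\mathscr{L}_{\mathrm{can}}$ as a $2$-torsion class in $\Br(K)$, then kill it using one of the three hypotheses. Your treatment of the three cases is essentially identical to the paper's.

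There is, however, a genuine gap in the setup, and you have misdiagnosed where the subtlety lies. The Hochschild--Serre spectral sequence for $G_K=\Gal(K^{\mathrm{sep}}/K)$ yields
\[
0 \longrightarrow \Pic(C) \longrightarrow \Pic(C \otimes_K K^{\mathrm{sep}})^{G_K} \longrightarrow \Br(K) \longrightarrow \Br(C),
\]
with $K^{\mathrm{sep}}$, not $\overline{K}$, in the middle term. Because $K$ is imperfect these differ: $\Pic_{C/K}(K^{\mathrm{sep}})\subsetneq\Pic_{C/K}(\overline{K})$ in general. A priori $\mathscr{L}_{\mathrm{can}}$ is only a line bundle on $C\otimes_K\overline{K}$, and it need \emph{not} descend to $K^{\mathrm{sep}}$: the Remark following Proposition~\ref{Proposition:Cubics} gives smooth plane cubics for which $\mathscr{L}_{\mathrm{can}}$ is defined over the purely inseparable extension $K(\sqrt{a_6})$ but over no separable extension of $K$. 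So your class $\alpha$ cannot even be formed until one has proved $[\mathscr{L}_{\mathrm{can}}]\in\Pic_{C/K}(K^{\mathrm{sep}})$; the sentence ``its isomorphism class is automatically $G_K$-invariant'' presupposes exactly this.

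This is precisely the step where the paper invokes Greenberg's approximation theorem, and it uses the local hypothesis at $v_0$ in an essential way beyond merely forcing ordinariness. Knowing that $\mathscr{L}_{\mathrm{can}}$ is defined over $K_{v_0}$, one gets an $\O_{v_0}$-point of an integral model of the theta characteristic torsor $T$ (a finite but non-\'etale $K$-scheme), approximates it by an $R^{\mathrm{h}}$-point via Greenberg, and then uses that $\Frac R^{\mathrm{h}}\subset K^{\mathrm{sep}}$. Thus the imperfectness issue is not about comparing $\Pic(C_{K_v})$ with $\Pic_{C/K}(K_v)$, as your final paragraph suggests, but about dragging $[\mathscr{L}_{\mathrm{can}}]$ from $\overline{K}$ down to $K^{\mathrm{sep}}$ in the first place.
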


\begin{proof}
Since $C$ admits a symmetric determinantal representation over $K_{v_0}$,
by Corollary \ref{Corollary:SymmetricDeterminantalRepresentationExistence} (3),
$\Jac(C)$ is ordinary and $\mathscr{L}_{\mathrm{can}}$ is defined over $K_{v_0}$.
Hence it is enough to prove that $\mathscr{L}_{\mathrm{can}}$ is defined over $K$.

We shall first prove that
$\mathscr{L}_{\mathrm{can}}$ gives a $K$-rational point on the relative Picard scheme $\Pic_{C/K}$
(\cite[Ch 8]{BoschLuetkebohmertRaynaud}, \cite[Section 3]{IshitsukaIto:LocalGlobal}).
The canonical sheaf $\Omega^1_{C}$ gives
a $K$-rational point $[\Omega^1_{C}] \in \Pic_{C/K}(K)$.
Let $T \subset \Pic_{C/K}$ be the fiber over $[\Omega^1_{C}]$
under the multiplication-by-2 map
\[ [2] \colon \Pic_{C/K} \longrightarrow \Pic_{C/K}. \]
The finite $K$-scheme $T$ is called the {\em theta characteristic torsor} (\cite{PoonenRains}).
Since $\mathscr{L}_{\mathrm{can}}$ is defined over $K_{v_0}$,
we have a $K_{v_0}$-rational point $[\mathscr{L}_{\mathrm{can}}] \in T(K_{v_0})$.
Let $\O_{v_0}$ be the ring of integers of $K_{v_0}$.
Then $R := \O_{v_0} \cap K$
is an excellent discrete valuation ring whose completion is $\O_{v_0}$
because it is the localization of a finitely generated $\F_2$-algebra;
see \cite[(7.8.3) (ii),(iii)]{EGA4-2}.
Let $R^{\mathrm{h}}$ be the Henselization of $R$.
Let $\mathfrak{T}$ be the integral closure of $R$ in $T$.
Then $\mathfrak{T}$ is a finite flat $R$-scheme with generic fiber $T$.
By the valuative criterion of properness,
the $K_{v_0}$-rational point $[\mathscr{L}_{\mathrm{can}}] \in T(K_{v_0})$
extends to an $\O_{v_0}$-rational point $x \in \mathfrak{T}(\O_{v_0})$.

By Greenberg's approximation theorem (\cite[Theorem 1]{GreenbergRationalPoints}),
for any $N \geq 1$,
there is an $R^{\mathrm{h}}$-rational point $y_N \in \mathfrak{T}(R^{\mathrm{h}})$
such that $x, y_N$ have the same image in $\mathfrak{T}(R/m^N)$,
where $m$ is the maximal ideal of $R$.
Since $\mathfrak{T}$ is a finite $R$-scheme,
if we take $N$ to be sufficiently large,
the image of $y_N$ in $\mathfrak{T}(\O_{v_0})$ coincides with $x$.
Therefore, $x$ is an $R^{\mathrm{h}}$-rational point,
and $[\mathscr{L}_{\mathrm{can}}]$ is a $(\Frac R^{\mathrm{h}})$-rational point.

Since the extension $(\Frac R^{\mathrm{h}})/K$ is separable and algebraic,
$[\mathscr{L}_{\mathrm{can}}]$ is a $K^{\mathrm{sep}}$-rational point.
Moreover, it is easy to see from the construction of $\mathscr{L}_{\mathrm{can}}$ that
$[\mathscr{L}_{\mathrm{can}}] \in \Pic_{C/K}(K^{\mathrm{sep}})$
is fixed by the action of $\Gal(K^{\mathrm{sep}}/K)$.
Hence it is a $K$-rational point, i.e.\ we have
\[ [\mathscr{L}_{\mathrm{can}}] \in \Pic_{C/K}(K). \]

We have the exact sequence of low-degree terms coming from the Leray spectral sequence
(cf.\ \cite[Ch 8]{BoschLuetkebohmertRaynaud}, \cite[Section 3]{IshitsukaIto:LocalGlobal}):
\[
\xymatrix{
0 \ar[r] & \Pic(C)
  \ar[r] & \Pic_{C/K}(K)
  \ar[r] & \Br(K)
  \ar[r] & \Br(C).
}
\]
It remains to prove that
$[\mathscr{L}_{\mathrm{can}}] \in \Pic_{C/K}(K)$
comes from a line bundle on $C$.
We shall prove it using one of the assumptions of the theorem.
Let $\alpha \in \Br(K)$ be the image of $[\mathscr{L}_{\mathrm{can}}]$
in the Brauer group $\Br(K)$.

\begin{itemize}
\item If $C$ has a $K$-rational point, we have $\Pic(C) \cong \Pic_{C/K}(K)$
(\cite[Ch 8, Section 1, Proposition 4]{BoschLuetkebohmertRaynaud}).
Hence $[\mathscr{L}_{\mathrm{can}}]$ comes from a line bundle on $C$.

\item If $d$ is odd, there is a finite extension $L/K$ of odd degree such that
$C$ has an $L$-rational point.
By a standard cohomological argument using $\mathrm{Res}$ and $\mathrm{Cor}$
(cf.\ \cite[Ch VII, Proposition 6]{SerreLocalFields}),
we see that $\alpha$ is killed by $[L:K]$.
On the other hand, since
$2 [\mathscr{L}_{\mathrm{can}}] = [\mathscr{L}_{\mathrm{can}} \otimes \mathscr{L}_{\mathrm{can}}] = [\Omega^1_{C}]$
comes from the canonical sheaf on $C$, we see that $\alpha$ is also killed by $2$.
Hence $\alpha$ is trivial,
and $[\mathscr{L}_{\mathrm{can}}]$ comes from a line bundle on $C$.

\item If $C$ admits a symmetric determinantal representation over $K_{v}$
for every place $v \neq w$ of $K$,
the element $\alpha$ lies in the kernel of the homomorphism
\[
\psi \colon \Br(K)\ \longrightarrow \bigoplus_{v\,:\,\text{place of}\ K,\ v \neq w} \Br(K_v).
\]
The exact sequence
\[
0 \ \longrightarrow \ \Br(K)\ \longrightarrow \bigoplus_{v\,:\,\text{place of}\ K} \Br(K_v)
\ \overset{\sum_v \inv_v}{\longrightarrow} \ \Q/\Z \ \longrightarrow \ 0
\]
in global class field theory (\cite[Theorem 8.1.17]{NeukirchSchmidtWingberg}, \cite[\S 9, \S 10]{TateGCFT})
shows that $\psi$ is injective.
Hence $\alpha$ is trivial,
and $[\mathscr{L}_{\mathrm{can}}]$ comes from a line bundle on $C$.
\end{itemize}

The proof of Theorem \ref{MainTheoremGeneral} is complete.
\end{proof}

\begin{rem}
In higher degree,
we do not know how to explicitly calculate a symmetric determinantal representation
of a smooth plane curve once we know it exists.
By \cite[Proposition 4.2]{BeauvilleDeterminantal},
it amounts to giving an explicit minimal free resolution of the graded
module associated with $\iota_{\ast} \mathscr{L}_{\mathrm{can}}$,
where $\iota \colon C \hookrightarrow \P^2$ is a fixed embedding.
\end{rem}

\section{The case of linear determinantal representations}
\label{Section:LinearDeterminantalRepresentation}

It is a natural question to study determinantal representations
which are not necessarily symmetric.
A smooth plane curve $C \subset \P^2$ of degree $d$ over a field $K$
is said to admit a {\em linear determinantal representation} over $K$
if there is a square (not necessarily symmetric) matrix $M$ of size $d$
with entries in $K$-linear forms in three variables $X,Y,Z$
such that $\det(M)=0$ is the defining equation of $C$.

It turns out that,
concerning the existence of linear determinantal representations,
the local-global principle holds for $d = 2$;
see \cite[Corollary 4.2]{Ishitsuka:PositiveProportion}.
But, it does {\em not} hold for $d = 3$.
The following result is proved by the first author.

\begin{prop}
\label{Proposition:Linear}
Let $K$ be a global field of arbitrary characteristic,
and $C \subset \P^2$ a smooth plane cubic over $K$.
Assume that the Mordell-Weil group $\Jac(C)(K)$ is trivial.
Then $C$ does not admit a linear determinantal representation over $K$,
whereas $C$ admits a linear determinantal representation over $K_v$ for every place $v$ of $K$.
\end{prop}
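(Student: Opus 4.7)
My plan is to translate the problem into the existence of a suitable degree-$3$ line bundle on $C$ (resp.\ on $C \otimes_K K_v$), via the classical correspondence (the non-symmetric analogue of \cite[Proposition 4.2]{BeauvilleDeterminantal}): over any overfield $F/K$, equivalence classes of linear determinantal representations of $C \otimes_K F$ correspond bijectively to isomorphism classes of line bundles $\mathcal{L}$ on $C \otimes_K F$ of degree $3$ satisfying $H^0(\mathcal{L}(-1)) = 0$. Since $C$ has genus $1$ and $\mathcal{L}(-1)$ has degree $0$, this vanishing is equivalent to $\mathcal{L} \not\cong \O_C(1)$. Thus the proposition reduces to: (a) over $K$ there is no degree-$3$ line bundle on $C$ distinct from $\O_C(1)$; (b) over each $K_v$ there is one.

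For (a), any degree-$3$ line bundle on $C$ over $K$ yields a $K$-point of the Picard scheme $\Pic^3_{C/K}$, a torsor under $\Jac(C)$. The class $[\O_C(1)]$ is itself such a $K$-point, trivializing the torsor, so $\Pic^3_{C/K}(K)$ is a single $\Jac(C)(K)$-orbit. The hypothesis $\Jac(C)(K) = 0$ collapses this orbit to $\{[\O_C(1)]\}$, so $\O_C(1)$ is the only degree-$3$ line bundle on $C$ over $K$, and (a) follows.

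For (b), fix a place $v$. It suffices to exhibit a non-trivial $L \in \Pic^0(C \otimes_K K_v)$, since then $\O_C(1) \otimes L$ is the desired line bundle. The low-degree exact sequence of the Leray spectral sequence (as used in the proof of Theorem \ref{MainTheoremGeneral}) gives
\[
0 \longrightarrow \Pic^0(C \otimes_K K_v) \longrightarrow \Jac(C)(K_v) \longrightarrow \Br(K_v),
\]
so the task is to show $\Pic^0(C \otimes_K K_v) \neq 0$. A transfer/corestriction argument using the $K_v$-rational divisor class $[\O_C(1)]$ of degree $3$ shows that the image of the last map is annihilated by $3$, hence lies in the finite subgroup $\Br(K_v)[3]$. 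On the other hand, $\Jac(C)(K_v)$ is always infinite: at a non-archimedean place it contains the formal group of $\Jac(C)$ over the valuation ring (uncountable), and at an archimedean place (in characteristic zero) $\Jac(C)(\R)$ or $\Jac(C)(\C)$ is a positive-dimensional real Lie group. Consequently $\Pic^0(C \otimes_K K_v)$ is infinite, establishing (b).

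The delicate step is the Brauer-group bound: one must verify that the image of $\Jac(C)(K_v) \to \Br(K_v)$ is killed by the index of $C$ over $K_v$ (which divides $3$ because $\deg \O_C(1) = 3$). Once this is in hand, the local existence reduces to the easy comparison between the infinite group $\Jac(C)(K_v)$ and the finite obstruction $\Br(K_v)[3]$.
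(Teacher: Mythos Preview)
Your argument is correct. The paper itself does not supply a proof of this proposition; it simply cites \cite[Corollary~4.2]{Ishitsuka:PositiveProportion}. Your approach---translating via Beauville's correspondence to line bundles of degree $3$ not isomorphic to $\O_C(1)$, then using triviality of $\Jac(C)(K)$ to pin down $\Pic^3(C)=\{\O_C(1)\}$ globally, and finally comparing the infinite group $\Jac(C)(K_v)$ against the finite obstruction in $\Br(K_v)[3]$ locally---is exactly the natural route and is almost certainly what the cited reference does. The one point you flagged as delicate (that the image of $\Jac(C)(K_v)\to\Br(K_v)$ is killed by the index, which divides $3$ because a hyperplane section furnishes an honest $K_v$-rational effective divisor of degree $3$) is standard: it follows from the usual restriction--corestriction argument through a closed point of degree dividing $3$, exactly as you outlined.
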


\begin{proof}
See \cite[Corollary 4.2]{Ishitsuka:PositiveProportion}.
\end{proof}

It is a simple matter to find smooth plane cubics
whose Jacobian varieties have trivial Mordell-Weil group.
For example, the following elliptic curves
\begin{align*}
E_1 &: Y^2 Z + T^3 Y Z^2 = X^3 + T^5 Z^3 \\
E_2 &: Y^2 Z + T XYZ = X^3 + T^5 Z^3
\end{align*}
over $\F_2(T)$ have trivial Mordell-Weil group.
These are taken from \cite[Table 3 in p.182]{Ito:Hiroyuki}.
By Proposition \ref{Proposition:Linear},
$E_1$ and $E_2$ do not admit linear determinantal representations over $\F_2(T)$,
but they admit linear determinantal representations
everywhere locally.

Since Proposition \ref{Proposition:Linear}
is valid over any global field,
we may find similar examples in other characteristics.
In \cite{Ishitsuka:PositiveProportion},
the proportion of smooth plane cubics over $\Q$
failing the local-global principle
for the existence of linear determinantal representations
is studied.

\subsection*{Acknowledgements}

When we studied the canonical theta characteristics on
algebraic curves in characteristic two,
the posts at \texttt{mathoverflow} were very helpful
(\textit{Effectiveness of the distinguished theta characteristic in characteristic 2} \\
(\texttt{http://mathoverflow.net/questions/139698/})).
We would like to take this occasion to thank
the contributors to this wonderful website.
The work of the first author was supported by JSPS KAKENHI Grant Number 13J01450 and 16K17572.
The work of the second author 
was supported by JSPS KAKENHI Grant Number 20674001 and 26800013.


\begin{thebibliography}{99}
\bibitem{ArtinRodriguezVillegasTate}
  Artin, M., Rodriguez-Villegas, F., Tate, J.,
  \textit{On the Jacobians of plane cubics},
  Adv.\ Math.\ {\bf 198} (2005), no.~1, 366-382.
\bibitem{BeauvilleDeterminantal}
  Beauville, A., \textit{Determinantal hypersurfaces},
  Michigan Math.\ J.\ 48 (2000), 39-64.
\bibitem{BhargavaGrossWang}
  Bhargava, M., Gross, B.\ H., Wang, X.,
  \textit{Arithmetic invariant theory II: Pure inner forms and obstructions to the existence of orbits},
  Representations of Lie Groups, in Honor of the 60th Birthday of David A.\ Vogan, Jr.,
  Progress in Mathematics, 312.\ Birkh\"auser/Springer, 2015, 139-171.
\bibitem{BoschLuetkebohmertRaynaud}
  Bosch, S., L\"utkebohmert, W., Raynaud, M., \textit{N\'eron models},
  Ergebnisse der Mathematik und ihrer Grenzgebiete (3), 21. Springer-Verlag, Berlin, 1990.
\bibitem{CasselsEllipticCurve}
  Cassels, J.\ W.\ S., \textit{Lectures on elliptic curves},
  London Mathematical Society Student Texts,\ 24. Cambridge University Press, Cambridge, 1991.
\bibitem{Desboves}
  Desboves, A.,
  \textit{R\'esolution en nombres entiers et sous sa forme de la plus g\'en\'erale,
  de l'\'equation cubique, homog\`ene \`a trois inconnues},
  Nouv.\ Ann.\ de la Math,. S\'er.\ III, vol.\ {\bf 5} (1886), 545-579.
\bibitem{Dixon}
  Dixon, A.\ C., \textit{Note on the reduction of a ternary quantic to a symmetric determinant},
  Proc.\ Cambridge Phil.\ Soc.\ 11 (1902), 350-351.
\bibitem{Dolgachev}
  Dolgachev, I.\ V., \textit{Classical algebraic geometry - A modern view},
  Cambridge University Press, Cambridge, 2012.
\bibitem{GreenbergRationalPoints}
  Greenberg, M.\ J., \textit{Rational points in Henselian discrete valuation rings},
  Inst.\ Hautes \'Etudes Sci.\ Publ.\ Math.\ No.\ 31 (1966), 59-64.
\bibitem{EGA4-2} Grothendieck, A. \textit{\'El\'ements de g\'eom\'etrie alg\'ebrique.\ IV. \'Etude locale des sch\'emas et des morphismes de sch\'emas.\ II.}, Inst.\ Hautes \'Etudes Sci.\ Publ.\ Math.\ No.\ 24 (1965), 231 pp.
\bibitem{GrothendieckBrauerIII}
  Grothendieck, A., \textit{Le groupe de Brauer III},
  Dix Expos\'es sur la Cohomologie des Sch\'emas,
  88-188 North-Holland, Amsterdam; Masson, Paris, 1968.
\bibitem{Ho}
  Ho, W., \textit{Orbit parametrizations of curves},
  Ph.D\ Thesis, Princeton University, 2009.
\bibitem{Ishitsuka:PositiveProportion}
  Ishitsuka, Y.,
  \textit{A positive proportion of cubic curves over $\Q$ admit linear determinantal representations},
  preprint, \texttt{arXiv:1512.05167}
\bibitem{Ishitsuka:FiniteField}
  Ishitsuka, Y.,
  \textit{Linear determinantal representations of smooth plane cubics over finite fields},
  preprint, \texttt{arXiv:1604.00115}
\bibitem{Ishitsuka:JSIAMLetters}
  Ishitsuka, Y.,
  \textit{An algorithm to obtain linear determinantal representations of smooth plane cubics over finite fields},
  preprint, to appear in JSIAM Letters, \texttt{arXiv:1605.06628}
\bibitem{IshitsukaIto:Fermat}
  Ishitsuka, Y., Ito, T., \textit{On the symmetric determinantal representations of the Fermat curves of prime degree}, 
  Int.\ J.\ Number Theory {\bf 12} (2016), 955-967.
\bibitem{IshitsukaIto:LocalGlobal}
  Ishitsuka, Y., Ito, T., \textit{The local-global principle for symmetric determinantal
  representations of smooth plane curves},
  published online in The Ramanujan Journal. \\
  DOI: http://dx.doi.org/10.1007/s11139-016-9775-3
\bibitem{Ito:Hiroyuki}
  Ito, H., \textit{On extremal elliptic surfaces in characteristic 2 and 3},
  Hiroshima Math.\ J.\ 32 (2002), no.\ 2, 179-188.
\bibitem{Jouanolou}
  Jouanolou, J.-P., \textit{Th\'eor\`emes de Bertini et applications},
  Progress in Mathematics, 42.\ Birkh\"auser Boston, Inc., Boston, MA, 1983.
\bibitem{MumfordTheta}
  Mumford, D., \textit{Theta characteristics of an algebraic curve},
  Ann.\ Sci.\ \'Ecole Norm.\ Sup.\ (4) 4 (1971), 181-192.
\bibitem{NeukirchSchmidtWingberg}
  Neukirch, J., Schmidt, A., Wingberg, K., \textit{Cohomology of number fields},
  Second edition.\ Grundlehren der Mathematischen Wissenschaften, 323.\ Springer-Verlag, Berlin, 2008.
\bibitem{PoonenRains}
  Poonen, B., Rains, E.,
  \textit{Self cup products and the theta characteristic torsor},
  Math.\ Res.\ Lett.\ 18 (2011), no.\ 6, 1305-1318.
\bibitem{SerreLocalFields}
  Serre, J.-P., \textit{Local fields}, Graduate Texts in Mathematics, 67.\ Springer-Verlag, New York-Berlin, 1979.
\bibitem{Silverman}
  Silverman, J.\ H., \textit{The arithmetic of elliptic curves},
  Second edition.\ Graduate Texts in Mathematics, 106.\ Springer, Dordrecht, 2009.
\bibitem{StoehrVoloch}
  St\"ohr, K.-O., Voloch, J.\ F.,
  \textit{A formula for the Cartier operator on plane algebraic curves},
  J.\ Reine Angew.\ Math.\ 377 (1987), 49-64.
\bibitem{TateGCFT}
  Tate, J.\ T., \textit{Global class field theory}, in Algebraic Number Theory (J.\ W.\ S.\ Cassels and A.\ Fr\"ohlich eds.),
  Proc.\ Instructional Conf., Brighton, 1965, 162-203.
\end{thebibliography}
\end{document}